\documentclass[12pt]{article}
\usepackage[T2A]{fontenc}
\usepackage[cp1251]{inputenc}
\usepackage[english]{babel}
\usepackage{amssymb,amsmath,amsthm,amsfonts}
\usepackage{color}
\usepackage{cite}
\usepackage[a4paper,left=20mm,right=20mm,bottom=20mm,top=20mm]{geometry}

 \newtheorem{thm}{Theorem}[section]
 \newtheorem{cor}[thm]{Corollary}
 \newtheorem{lem}[thm]{Lemma}
 
 \theoremstyle{definition}
 \newtheorem{defn}[thm]{Definition}
 \theoremstyle{remark}
 
 \newtheorem*{ex}{Example}

\begin{document}

\title{On the supersolubility of a finite group with NS-supplemented Sylow subgroups}
\author{V.\,S.~Monakhov, A.\,A.~Trofimuk}

%\date{January 14, 2019}

\maketitle

\begin{abstract}
A subgroup $A$ of a group~$G$ is said to be {\sl NS-supplemented} in $G$, if
there exists a subgroup~$B$ of $G$ such that $G=AB$ and whenever $X$~is a normal subgroup of~$A$ and $p\in \pi(B)$, there exists a Sylow  $p$-subgroup~$B_p$ of~$B$ such that $XB_p=B_pX$.
In this paper, we proved the supersolubility of a group with  NS-supplemented non-cyclic Sylow subgroups.
The solubility of a group with NS-supplemented maximal subgroups is obtained.
\end{abstract}

\section{Introduction}

Throughout this paper, all groups are finite and $G$ always denotes a finite group. We use the standard notations and terminology of~\cite{hup}. The set of all prime divisors of the order of $G$ is denoted by~$\pi (G)$. The notation $Y\le X $ means that $Y$ is a subgroup of a group~$X$. The  semidirect product of a normal subgroup $A$ and a subgroup $B$ is denoted by $[A]B$.

By the Zassenhaus Theorem (\cite[IV.2.11]{hup}),  a group $G$ with cyclic Sylow subgroups has a cyclic Hall subgroup $H$ such that the quotient $G/H$ is also cyclic.  In particular, $G$ is supersoluble.

A group $G$ with abelian Sylow subgroups may be non-soluble (for example, $PSL(2,5)$)
and the compositional factors of $G$ are known~\cite{wal}.

In some papers, the sufficient conditions of solubility and supersolubility of a group in which Sylow subgroups permute with some subgroups  are established. For example,
the supersolubility of a group~$G$ such that every Sylow subgroup~$P$ of $G$ permutes with subgroups of some supplement of~$P$  in~$G$ is obtained in works~\cite{m06}--\cite{guo}.

The following concept is introduced in \cite{ar}.

\begin{defn} \label{d1}
Two subgroups $A$ and $B$ of a group~$G$ are said to be
{\sl NS\nobreakdash-\hspace{0pt}permutable},
if they satisfy the following conditions:

$(1)$
whenever $X$~is a normal subgroup of~$A$ and~$p\in \pi (B)$,
there exists a Sylow $p$-subgroup~$B_p$ of~$B$ such that~$XB_p=B_pX$;

$(2)$
whenever $Y$~is a normal subgroup of~$B$ and~$p\in \pi (A)$,
there exists a Sylow $p$-subgroup~$A_p$ of~$A$ such that~$YA_p=A_pY$.

Moreover, if $G=AB$, we say that~$G$ is an {\sl NS\nobreakdash-\hspace{0pt}permutable
product} of the subgroups~$A$ and~$B$.
\end{defn}

The totally permutable~\cite{bal} and totally c-permutable~\cite{sk} subgroups are
NS\nobreakdash-\hspace{0pt}permutable~\cite[Lemma 2]{ar}.
The supersolubility of a group $G=AB$ which is the
NS\nobreakdash-\hspace{0pt}permutable
product of supersoluble subgroups~$A$ and~$B$ is obtained in \cite{ar}.

We introduce the following

\begin{defn} \label{d2}

A subgroup $A$ of a group~$G$ is said to be {\sl NS-supplemented} in $G$, if
there exists a subgroup~$B$ of $G$ such that:

$(1)$ $G=AB$;

$(2)$ whenever $X$~is a normal subgroup of~$A$ and $p\in \pi(B)$,
there exists a Sylow  $p$-subgroup~$B_p$ of~$B$ such that $XB_p=B_pX$.

In this case we say that~$B$ is a {\sl NS-supplement} of~$A$ in~$G$.
\end{defn}

In this paper, we proved the supersolubility of a group in which every non-cyclic Sylow subgroup is NS-supplemented. The solubility of a group with NS-supplemented maximal subgroups is obtained.

\section{Preliminaries}\label{sec2}

Definition~\ref{d2} implies the following result for~$X=A$.

\begin{lem} \label{l1}
Let  $A$ be an  NS-supplemented subgroup of $G$ and $B$~is its  NS\nobreakdash-\hspace{0pt}supplement in $G$. Then for every $p\in \pi(B)$ there exists a Sylow $p$-subgroup~$B_p$ of $B$ such that~$AB_p=B_pA$.
\end{lem}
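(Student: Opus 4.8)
The plan is to obtain the conclusion as a direct specialization of condition~$(2)$ in Definition~\ref{d2}, exploiting the fact that $A$ is a normal subgroup of itself. Since $B$ is by hypothesis an NS-supplement of $A$ in $G$, condition~$(2)$ guarantees that for every normal subgroup $X$ of $A$ and every $p\in\pi(B)$ there exists a Sylow $p$-subgroup $B_p$ of $B$ with $XB_p=B_pX$.

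First I would observe that $A\trianglelefteq A$, so $A$ itself is an admissible choice of $X$ in the hypothesis. Substituting $X=A$ into the statement above produces, for each $p\in\pi(B)$, a Sylow $p$-subgroup $B_p$ of $B$ satisfying $AB_p=B_pA$, which is precisely the assertion of the lemma.

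There is essentially no obstacle to overcome here: the result is an immediate reading of the defining property with the largest admissible choice of $X$. The point of isolating it as a separate lemma is to record the permutability of the whole subgroup $A$ with suitable Sylow subgroups of its supplement, a convenient fact to cite in the supersolubility and solubility arguments that follow.
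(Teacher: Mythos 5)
Your proposal is correct and is exactly the paper's argument: the authors likewise obtain the lemma by noting that $A$ is normal in itself and applying condition $(2)$ of Definition~\ref{d2} with $X=A$. Nothing further is needed.
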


\begin{lem} \label{l2}
Let $K$~be a normal subgroup of $G$. If $A$ is  NS-supplemented in $G$ and $B$ is its NS-supplement in $G$, then  $AK/K$ is  NS-supplemented in~$G/K$ and $BK/K$~ is its  NS-supplement in $G/K$.
\end{lem}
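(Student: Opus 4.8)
The plan is to verify the two defining conditions of Definition~\ref{d2} for the pair $\big(AK/K,\,BK/K\big)$ in $G/K$, transferring the permutability data from $G$ along the natural epimorphism $\varphi\colon G\to G/K$, $g\mapsto gK$. Condition $(1)$ is immediate: since $G=AB$ and $K\trianglelefteq G$, we have $AK\cdot BK=ABK=GK=G$, and passing to the quotient gives $(AK/K)(BK/K)=G/K$. So $BK/K$ supplements $AK/K$ in $G/K$, and it remains only to establish the Sylow-permutability condition $(2)$.

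For condition $(2)$, let $\bar X$ be a normal subgroup of $\bar A:=AK/K$ and let $p\in\pi(\bar B)$, where $\bar B:=BK/K$. The key step is to pull $\bar X$ back to a normal subgroup of $A$ \emph{itself}, not merely of $AK$. For this I use the second isomorphism theorem $AK/K\cong A/(A\cap K)$: under it, the normal subgroups of $\bar A$ correspond to the normal subgroups of $A$ containing $A\cap K$, so there is $X_0\trianglelefteq A$ with $\bar X=X_0K/K$. Moreover $\bar B\cong B/(B\cap K)$ is a quotient of $B$, whence $\pi(\bar B)\subseteq\pi(B)$ and in particular $p\in\pi(B)$. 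Applying the NS-supplemented hypothesis for the pair $(A,B)$ in $G$ to the normal subgroup $X_0$ of $A$ and the prime $p$, I obtain a Sylow $p$-subgroup $B_p$ of $B$ with $X_0B_p=B_pX_0$.

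It then remains to package this back into the quotient. Because $\varphi$ is surjective, $\varphi(B_p)=B_pK/K$ is a Sylow $p$-subgroup of $\varphi(B)=\bar B$, since the image of a Sylow $p$-subgroup under an epimorphism is again a Sylow $p$-subgroup of the image. Finally, using $K\trianglelefteq G$ to commute $K$ past $B_p$ and $X_0$, one computes
\[
\bar X\,(B_pK/K)=X_0B_pK/K=B_pX_0K/K=(B_pK/K)\,\bar X ,
\]
so $B_pK/K$ is the desired Sylow $p$-subgroup of $\bar B$ that permutes with $\bar X$. This verifies condition $(2)$ and completes the argument.

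I expect the only genuine obstacle to be the pull-back carried out in the second paragraph: a normal subgroup of $AK/K$ need not, a priori, arise from a normal subgroup of $A$, since the ordinary correspondence theorem delivers only normality in $AK$. The isomorphism $AK/K\cong A/(A\cap K)$ is exactly what repairs this, producing a preimage $X_0$ normal in $A$ — precisely the form required to invoke the hypothesis on $(A,B)$. Once $X_0$ and $B_p$ are in hand, the remaining manipulations are routine identities for products with the normal subgroup $K$.
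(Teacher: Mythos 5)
Your proof is correct and takes essentially the same route as the paper's: the paper pulls $\bar X$ back to its full preimage $X$ in $G$ and uses Dedekind's identity to write $X=(A\cap X)K$ with $A\cap X\trianglelefteq A$, which is exactly your $X_0$ obtained instead via the second isomorphism theorem, and the remaining steps (invoking the hypothesis for $(A,B)$ and pushing $B_p$ forward to $B_pK/K$) coincide. If anything, you are more careful on two points the paper leaves implicit, namely that $\pi(BK/K)\subseteq\pi(B)$ and that $B_pK/K$ is indeed a Sylow $p$-subgroup of $BK/K$.
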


\begin{proof}
It's obvious that $G/N=(AK/K)(BK/K)$. Let $X/K$~be a normal subgroup of~$AK/K$ and $p\in \pi(BK/K)$.
Then $X=(A\cap X)K$ and $A\cap X$ is normal in~$A$. By the hypothesis,
for every $p\in \pi (B)$ there exists a Sylow  $p$-subgroup~$B_p$
of~$B$ such that $(A\cap X)B_p=B_p(A\cap X)$. Hence
$$
((A\cap X)K)B_p=B_p((A\cap X)K)
$$
and $X/K$ permutes with Sylow $p$-subgroup~$B_pK/K=(BK/K)_p$
of~$BK/K$.
\end{proof}

\begin{lem} \label{tk} \emph {(\cite[Theorem 2]{tk})}
Let $G$ be a group with $p\in \pi (G)$ and $p\ne 3$.
If $G$ has a  Hall $\{p,r\}$-subgroup for every~$r\in \pi (G)$, then~$G$ is $p$-soluble.
\end{lem}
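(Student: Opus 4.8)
The plan is to argue by induction on $|G|$, passing to a minimal counterexample and reducing the whole question to non-abelian simple groups, where the classification of Hall subgroups (and hence the classification of finite simple groups) does the real work. So suppose the statement fails and let $G$ be a counterexample of least order: $G$ has a Hall $\{p,r\}$-subgroup for every $r\in\pi(G)$, with $p\ne 3$, yet $G$ is not $p$-soluble.

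First I would record that the hypothesis is inherited by quotients. If $N\trianglelefteq G$ and $H$ is a Hall $\{p,r\}$-subgroup of $G$, then $HN/N$ is a Hall $\{p,r\}$-subgroup of $G/N$; thus $G/N$ again satisfies the hypothesis for each $r\in\pi(G/N)$ (and is a $p'$-group, hence trivially $p$-soluble, when $p\notin\pi(G/N)$). By minimality $G/N$ is $p$-soluble for every nontrivial normal $N$. Since $p$-solubility is closed under subgroups, if $G$ had two distinct minimal normal subgroups $N_1,N_2$ it would embed in $G/N_1\times G/N_2$ and be $p$-soluble; hence $G$ has a unique minimal normal subgroup $N$. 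As $p$-solubility is closed under extensions and $G$ is not $p$-soluble, $N$ is not $p$-soluble, so $N=S_1\times\cdots\times S_k$ with the $S_i$ isomorphic to a fixed non-abelian simple group $S$ satisfying $p\mid |S|$.

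Next I would push the Hall property down to $S$. For a fixed $r\in\pi(S)\subseteq\pi(G)$ choose a Hall $\{p,r\}$-subgroup $H$ of $G$. Since $N\trianglelefteq G$, the index $|N:N\cap H|=|NH:H|$ divides $|G:H|$, a $\{p,r\}'$-number, so $N\cap H$ is a Hall $\{p,r\}$-subgroup of $N$. Because $S$ is a quotient of $N$ (the projection onto one direct factor) and the existence of a Hall $\{p,r\}$-subgroup is inherited by quotients, $S$ itself has a Hall $\{p,r\}$-subgroup. As $r$ was arbitrary, $S$ is a non-abelian simple group with $p\mid|S|$ and $p\ne 3$ possessing a Hall $\{p,r\}$-subgroup for every $r\in\pi(S)$.

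The final and hardest step is to show that no such $S$ exists, which is where I expect the main obstacle to lie and where the classification of finite simple groups is unavoidable. I would invoke the known description of Hall subgroups of simple groups (Revin--Vdovin and their predecessors), running through the alternating, sporadic and Lie-type families: in each case the primes $r$ admitting a Hall $\{p,r\}$-subgroup are too few to exhaust $\pi(S)$ once $p\ne 3$. The exceptional behaviour is concentrated precisely at $p=3$ --- for instance $PSL(2,7)$, of order $2^3\cdot 3\cdot 7$, has subgroups of orders $24$ and $21$, giving Hall $\{3,2\}$- and $\{3,7\}$-subgroups, so it meets the hypothesis for $p=3$ while being simple --- which is exactly why that prime must be excluded. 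This contradiction with the existence of $S$ completes the proof.
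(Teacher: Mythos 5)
The first thing to note is that the paper does not prove this statement at all: Lemma~\ref{tk} is imported verbatim as Theorem~2 of Tyutyanov and Kniahina~\cite{tk}, so there is no internal proof to compare yours against.

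Your reduction is correct as far as it goes: inheritance of the Hall $\{p,r\}$-hypothesis by quotients, the uniqueness of the minimal normal subgroup in a minimal counterexample, the fact that a Hall $\{p,r\}$-subgroup $H$ of $G$ meets a normal subgroup $N$ in a Hall $\{p,r\}$-subgroup of $N$, and the passage to a simple direct factor $S$ are all standard and correctly argued. This reduces the theorem to the claim that no non-abelian simple group $S$ with $p\in\pi(S)$ and $p\ne 3$ has a Hall $\{p,r\}$-subgroup for every $r\in\pi(S)$. But that claim \emph{is} the entire content of the theorem --- it is where all the difficulty lives --- and you do not prove it: you assert it with a gesture at the Revin--Vdovin classification of Hall subgroups of simple groups and a promised case-by-case run through the alternating, sporadic, and Lie-type families that is never carried out. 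That CFSG-based case analysis is precisely what occupies the cited paper of Tyutyanov and Kniahina. So your proposal is an accurate outline of how the result is actually proved in the literature --- including the correct identification of $PSL(2,7)$, with its Hall $\{3,2\}$- and $\{3,7\}$-subgroups $S_4$ and $[Z_7]Z_3$, as the reason $p=3$ must be excluded --- but as a self-contained argument it has a genuine gap at exactly the decisive step.
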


\begin{lem} \label{gur} \emph {(\cite[Corollary 3]{gur})}
Let  $G$ be a group such that every maximal subgroup has prime power index. Then~$G=S(G)$ or~$G/S(G)\simeq PSL(2,7)$.
 \end{lem}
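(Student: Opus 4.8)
The plan is to argue by a minimal counterexample and to reduce the statement to the identification of a single simple group. First I observe that the hypothesis is inherited by quotients: the maximal subgroups of $G/N$ are exactly the images of the maximal subgroups of $G$ containing $N$, and such an image has the same index as the original subgroup, so every maximal subgroup of $G/N$ again has prime power index. Hence, taking $G$ to be a counterexample of least order, I may pass to $\bar G=G/S(G)$; since $S(\bar G)=1$ and $\bar G$ is non-soluble (as $G$ is non-soluble and $S(G)$ is soluble), minimality forces the counterexample itself to satisfy $S(G)=1$. Thus it suffices to prove that a non-soluble group $G$ with trivial soluble radical in which every maximal subgroup has prime power index is isomorphic to $PSL(2,7)$.

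With $S(G)=1$ the Fitting subgroup is trivial, so the socle $\mathrm{Soc}(G)=S_1\times\dots\times S_k$ is a direct product of non-abelian simple groups and $C_G(\mathrm{Soc}(G))=1$; consequently $G$ embeds in $\mathrm{Aut}(\mathrm{Soc}(G))$. The next step is to show that $k=1$ and that $G$ is almost simple with socle a single simple group $S$: here one examines a maximal subgroup lying over the stabiliser of the set $\{S_1,\dots,S_k\}$ and the diagonal-type maximal subgroups, whose indices fail to be prime powers once $k\ge 2$, and similarly one rules out the extra maximal subgroups of composite index that appear when $G$ properly exceeds $\mathrm{Inn}(S)$.

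To pin down $S$ I would combine two ingredients. The first is arithmetic and explains \emph{why the prime $3$ is exceptional}: by Lemma~\ref{tk}, if for some prime $p\neq 3$ the group $G$ possesses a Hall $\{p,r\}$-subgroup for every $r\in\pi(G)$, then $G$ is $p$-soluble, and since a non-abelian simple section cannot be a $p$-group, every such section would then be a $p'$-group. If this held for \emph{all} $p\in\pi(G)\setminus\{3\}$, every composition factor of $G$ would be a $3$-group, contradicting non-solubility; hence the non-solubility of $G$ must be forced through the prime $3$, so $3\in\pi(G)$ and some Hall $\{p,r\}$-subgroup is missing. The second ingredient is Guralnick's classification of the subgroups of prime power index in a non-abelian simple group: every maximal subgroup of $G$ yields such a subgroup of $S$, and running the short list of possibilities—$A_n$ with $n=p^a$, the line and hyperplane stabilisers in $PSL(n,q)$, and the finitely many sporadic exceptions—against the requirement that \emph{every} maximal subgroup have prime power index leaves $PSL(2,7)\cong PSL(3,2)$ as the only survivor: its maximal subgroups are the two classes of $S_4$ (index $7$) and the Frobenius group $7{:}3$ (index $8=2^3$), both of prime power index, while the dihedral subgroups that would otherwise give composite index are absorbed into the $S_4$'s. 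One finally checks that no proper almost simple extension such as $PGL(2,7)$ retains the property, since it acquires a maximal dihedral subgroup of composite index $21$.

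The main obstacle is this last identification. It rests on the classification of finite simple groups through Guralnick's theorem, and the real work is the finite but delicate case analysis confirming that $PSL(2,7)$ is the unique simple group all of whose maximal subgroups have prime power index, together with the bookkeeping that eliminates products of several simple factors and proper almost simple overgroups. The reduction in the first two paragraphs is routine; everything hard is concentrated in matching the prime power index condition against the known subgroup structure of the simple groups, with Lemma~\ref{tk} serving to localise the failure of solubility at the exceptional prime $3$.
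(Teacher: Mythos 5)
First, a point of comparison you could not have known: the paper contains \emph{no proof} of this lemma at all. It is imported verbatim as Corollary~3 of Guralnick \cite{gur}, so there is no internal argument to match yours against; the only meaningful comparison is with Guralnick's own derivation, which obtains the corollary from his Theorem~1, the CFSG-based classification of subgroups of prime power index in non-abelian simple groups (the possibilities being $A_{p^a}$, line/hyperplane stabilizers in $PSL(n,q)$, and the exceptions $PSL(2,11)$, $M_{11}$, $M_{23}$, $PSU(4,2)$). Your outline has the right overall shape for that derivation: the minimal-counterexample reduction to $S(G)=1$ in your first paragraph is correct, and your subgroup data for $PSL(2,7)\cong PSL(3,2)$ (two classes of $S_4$ of index $7$, a Frobenius subgroup of order $21$ and index $8$) and for $PGL(2,7)$ (a maximal dihedral subgroup of index $21$) are accurate.

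Two parts, however, do not hold up. The ``first ingredient'' built on Lemma~\ref{tk} is a non sequitur: you never establish the hypothesis of that lemma --- that $G$ has a Hall $\{p,r\}$-subgroup for \emph{every} $r\in\pi(G)$ --- and it cannot be established from the prime-power-index condition, since it fails for $PSL(2,7)$ itself, which has no Hall $\{2,7\}$-subgroup (a subgroup of order $56$ would have index $3$). The conclusion you extract, that some Hall subgroup is missing, is logically valid but does no work towards identifying $PSL(2,7)$; the exceptional prime $3$ in Lemma~\ref{tk} and the exceptional group $PSL(2,7)$ in Guralnick's corollary are independent phenomena that merely share a witness. Second, the genuinely load-bearing steps are asserted rather than proved: (i) that a socle with $k\ge 2$ simple factors forces a maximal subgroup of composite index --- for $T\times T$ the full diagonal is maximal of index $|T|$, never a prime power by Burnside's $p^aq^b$-theorem, but the general case of $G$ acting transitively on $k$ factors needs an actual argument; (ii) that an almost simple group strictly exceeding its socle always acquires a maximal subgroup of composite index; and (iii) the case-by-case elimination of the other entries of Guralnick's list. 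You candidly flag (iii) as the hard core, but flagging a gap does not fill it. So as a self-contained proof the proposal is incomplete; as a reduction to Guralnick's Theorem~1 it is essentially the standard route --- which is exactly what the paper sidesteps by citing the result.
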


Here $S(G)$~is the maximal normal soluble subgroup of~$G$.

\section{Groups with NS-supplemented subgroups}\label{sec3}

\begin{thm}\label{t1}
If a Sylow $p$-subgroup $P$ of $G$ is NS-supplemented in $G$, then~$G$ is $p$-supersoluble in each of the following cases:

$(1)$ $p\neq 3$;

$(2)$ $p=3$ and $G$ is 3-soluble.
\end{thm}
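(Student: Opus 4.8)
The plan is to first establish $p$-solubility from the existence of many Hall subgroups, and then to upgrade to $p$-supersolubility by an induction in which the NS-supplement is used to force every minimal normal $p$-subgroup to have order $p$.

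First I would produce Hall $\{p,r\}$-subgroups for all $r\in\pi(G)$. Write $G=PB$ with $B$ an NS-supplement of $P$. Since $G=PB$ and $|P|$ is a power of $p$, every prime $r\ne p$ in $\pi(G)$ lies in $\pi(B)$, and for such $r$ a Sylow $r$-subgroup $B_r$ of $B$ is already a Sylow $r$-subgroup of $G$. By Lemma~\ref{l1} we may take $B_r$ with $PB_r=B_rP$; then $PB_r$ is a Hall $\{p,r\}$-subgroup of $G$. As $P$ itself serves for $r=p$, the group $G$ has a Hall $\{p,r\}$-subgroup for every $r\in\pi(G)$. In case~(1) Lemma~\ref{tk} now yields that $G$ is $p$-soluble, while in case~(2) $p$-solubility is part of the hypothesis.

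Next I would prove $p$-supersolubility by induction on $|G|$, using that the class of $p$-supersoluble groups is a saturated formation. By Lemma~\ref{l2} the hypotheses pass to $G/N$ for every minimal normal subgroup $N$ (note that $PN/N$ is a Sylow $p$-subgroup of $G/N$, and in case~(2) the quotient is still $3$-soluble), so $G/N$ is $p$-supersoluble by induction. If $G$ had two distinct minimal normal subgroups, the embedding of $G$ into the product of the corresponding quotients would make $G$ $p$-supersoluble; hence we may assume $G$ has a unique minimal normal subgroup $N$. Since $G$ is $p$-soluble, $N$ is elementary abelian. If $N$ is a $p'$-group, then refining a chief series through $N$ shows at once that every $p$-chief factor of $G$ is a $p$-chief factor of $G/N$, so $G$ is $p$-supersoluble. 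Thus we may assume $N$ is a $p$-group, and $N\le O_p(G)\le P$.

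The decisive step, and the main obstacle, is to show $|N|=p$. Regarding $N$ as an $\mathbb{F}_p[P]$-module and using that $P$ is a $p$-group, choose a $P$-invariant subgroup $X$ of $N$ with $|N:X|=p$; then $X\trianglelefteq P$. Assuming $|N|>p$ we have $X\ne 1$. For each prime $q\in\pi(B)$ with $q\ne p$, Definition~\ref{d2} provides a Sylow $q$-subgroup $B_q$ of $B$ (hence of $G$) with $XB_q=B_qX$ a subgroup of $NB_q$. Since $N\trianglelefteq G$ and $B_q\cap N=1$, Dedekind's law gives $XB_q\cap N=X(B_q\cap N)=X$, so $X=XB_q\cap N$ is normal in $XB_q$; in particular $B_q$ normalizes $X$. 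Consequently $N_G(X)$ contains the full Sylow $p$-subgroup $P$ together with a full Sylow $q$-subgroup of $G$ for every prime $q\ne p$; hence $|N_G(X)|$ is divisible by $|G|_q$ for every $q\in\pi(G)$, forcing $N_G(X)=G$, i.e. $X\trianglelefteq G$. This contradicts the minimality of $N$, since $1\ne X<N$. Therefore $|N|=p$, and combining this with the $p$-supersolubility of $G/N$ we conclude that $G$ is $p$-supersoluble. The only genuine difficulty lies in this last paragraph; everything else is bookkeeping with Lemmas~\ref{l1}--\ref{tk} and with the formation properties of the class of $p$-supersoluble groups.
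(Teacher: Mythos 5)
Your proposal is correct and takes essentially the same approach as the paper: the same construction of Hall $\{p,r\}$-subgroups via Lemma~\ref{l1} and Lemma~\ref{tk}, the same induction through Lemma~\ref{l2}, and the same decisive computation $N\cap XB_q=X(N\cap B_q)=X$ (Dedekind's law) showing that a subgroup $X\trianglelefteq P$ lying inside a normal $p$-subgroup is normalized by a full Sylow $q$-subgroup of $G$ for every $q\ne p$, hence normal in $G$. The only difference is the direction of the final step --- the paper chooses $X$ of order $p$ in $O_p(G)\cap Z(P)$ and applies induction to $G/X$, whereas you choose a $P$-invariant hyperplane $X$ of the unique minimal normal subgroup $N$ and force $|N|=p$ by contradiction; these are mirror images of the same argument (and your passing claim that $p$-solubility makes $N$ elementary abelian is inaccurate when $N$ is a non-abelian $p'$-group, but harmless, since your $p'$-case disposal never uses commutativity).
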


\begin{proof}
Let~$B$ be an NS-supplement of~$P$ in~$G$.
By Lemma~\ref{l1}, for any $q\in \pi (B)\setminus \{p\}$ there exists a Sylow $q$-subgroup
$Q$ of~$B$ such that~$PQ=QP$. The subgroup~$PQ$ is a Hall $\{p,q\}$-subgroup of~$G$. Since $q$~is an arbitrary prime of $\pi (G)\setminus \{p\}$, it follows that by Lemma~\ref{tk},~$G$ is $p$-soluble for~$p\neq 3$ and by the hypothesis, $G$ is 3-soluble for~$p=3$.

We use induction on the order of $G$. Let $N$ be an arbitrary non-trivial normal subgroup in $G$.
Then by Lemma~\ref{l2}, a Sylow $p$-subgroup $PN/N$ is NS-supplemented in $G/N$. By induction, $G/N$ is $p$-supersoluble, $O_{p^\prime}(G)=1$ and $N=O_p(G)\ne 1$.

We choose a subgroup $X$ of $G$ such that $X\leq N\cap Z(P)$ and $|X|=p$. Since $X$ is normal in $P$, it follows that for every $r\in \pi(B)$ there exists a Sylow $r$-subgroup $R$ of $B$ such that $XR=RX$. If $p\neq r$, then the subgroup $N\cap XR=X(N\cap R)=X$ is normal in~$XR$. This is true for any prime~$r$,
hence $X$ is normal in $G$. Since the quotient $G/X$ is $p$-supersoluble by induction, $G$ is $p$-supersoluble.
\end{proof}

\begin{cor}
If all non-cyclic Sylow subgroups of $G$ are NS-supplemented in $G$, then~$G$ is supersoluble.
\end{cor}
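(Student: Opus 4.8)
The plan is to deduce the corollary from Theorem~\ref{t1} by a two-stage argument: first establish that $G$ is soluble, and then upgrade this to supersolubility by showing that $G$ is $p$-supersoluble for every $p\in\pi(G)$. The reason for separating the two stages is that Theorem~\ref{t1} yields $p$-supersolubility unconditionally only when $p\ne 3$; for $p=3$ it requires $3$-solubility as an input, and a cyclic (hence unsupplemented) Sylow subgroup gives no information by itself. Both gaps close once global solubility is in hand, so the real work is concentrated in the solubility step.

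For solubility I would pivot on the prime $2$. If $G$ has even order and its Sylow $2$-subgroup is non-cyclic, then that subgroup is NS-supplemented, and Theorem~\ref{t1}$(1)$ (applicable since $2\ne 3$) shows that $G$ is $2$-supersoluble, in particular $2$-soluble; by the Odd Order Theorem a $2$-soluble group is soluble, so $G$ is soluble. If instead the Sylow $2$-subgroup is cyclic, Burnside's normal complement theorem provides a normal $2$-complement $K$ of odd order; then $K$ is soluble and $G/K$ is a $2$-group, whence $G$ is soluble. Finally, if $2\nmid|G|$ then $G$ has odd order and is soluble directly. Thus $G$ is soluble in every case.

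With solubility secured, I would show that $G$ is $p$-supersoluble for each $p\in\pi(G)$. When the Sylow $p$-subgroup is non-cyclic it is NS-supplemented, and Theorem~\ref{t1} applies: case~$(1)$ if $p\ne 3$, and case~$(2)$ if $p=3$, the required $3$-solubility now being available since $G$ is soluble. When the Sylow $p$-subgroup is cyclic, any $p$-chief factor of the soluble group $G$ is elementary abelian; being a $p$-group realised as a section of $G$, it is isomorphic to a subquotient of a Sylow $p$-subgroup and is therefore cyclic, hence of order $p$. So $G$ is $p$-supersoluble in this case as well. As $G$ is $p$-supersoluble for all $p\in\pi(G)$, every chief factor of $G$ is cyclic and $G$ is supersoluble.

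The main obstacle is the solubility step, and more precisely the asymmetric role of the prime $3$ in Theorem~\ref{t1}: one cannot invoke Lemma~\ref{tk} at $p=3$, so $3$-solubility cannot be extracted from the Hall $\{3,q\}$-subgroups directly. The route above sidesteps this by never attempting to prove $3$-solubility in isolation; it instead obtains full solubility from the behaviour at the prime $2$ and then feeds this back into Theorem~\ref{t1}$(2)$. One should also record the easy but essential facts used tacitly in the cyclic-Sylow case, namely that a subquotient of a cyclic group is cyclic and that an elementary abelian cyclic $p$-group has order $p$.
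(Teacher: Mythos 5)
Your proof is correct and takes essentially the same route as the paper: a two-stage argument that first secures solubility by pivoting on the cyclic/non-cyclic dichotomy at the even prime (the paper pivots on the smallest prime of $\pi(G)$ and cites Huppert IV.2.8 where you use Burnside's normal complement theorem, both ultimately resting on the Odd Order Theorem), and then applies Theorem~\ref{t1} prime by prime, with the cyclic-Sylow case handled by the chief-factor argument. The only cosmetic difference is that pivoting on the smallest prime lets the paper treat odd-order groups without a separate case.
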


\begin{proof}
Let $p$ be the smallest prime of $\pi(G)$ and $P$ be a Sylow
$p$-subgroup of $G$. If $P$ is cyclic, then $G$ is
$p$-nilpotent~\cite[IV.2.8]{hup}. If $P$ is non-cyclic,
then $P$ is NS-supplemented in $G$ and by Theorem~\ref{t1}, $G$ is $p$-nilpotent.
In particular,  $G$ is soluble and  we apply Theorem~\ref{t1} for each~$r\in \pi(G)$.
Let $R$ be a Sylow $r$-subgroup of $G$.
If $R$ is cyclic, then $R$ is $r$-supersoluble. If $R$
is non-cyclic, then $R$ is NS-supplemented in $G$ and $G$ is
$r$-supersoluble by Theorem~\ref{t1}. Thus, $G$ is $r$-supersoluble for any $r\in \pi(G)$. Consequently,
$G$ is supersoluble.
\end{proof}

\begin{ex}
The group $PSL(2,7)$ is an NS-supplement of  its Sylow 3-subgroup.  Hence we can not omit the condition  $\ll$group is 3-soluble$\gg$  in Theorem~\ref{t1}.
\end{ex}

\begin{thm}\label{t2}
If all maximal subgroups of  $G$ are  NS-supplemented in $G$,
then $G$ is soluble.
\end{thm}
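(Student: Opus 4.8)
The plan is to argue directly: first I would show that every maximal subgroup of $G$ has prime power index, and then invoke Lemma~\ref{gur}. Let $M$ be a maximal subgroup of $G$ with NS-supplement $B$. By Lemma~\ref{l1}, for each $p\in\pi(B)$ there is a Sylow $p$-subgroup $B_p$ of $B$ with $MB_p=B_pM$, so $MB_p$ is a subgroup lying between $M$ and $G$; by maximality $MB_p=M$ or $MB_p=G$. If $MB_p=M$ held for every $p\in\pi(B)$, then $M$ would contain a full Sylow $p$-subgroup of $B$ for each such $p$, whence $|B\cap M|$ is divisible by $|B|_p$ for all $p$, forcing $B\le M$ and thus $G=MB=M$, a contradiction. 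Hence $MB_p=G$ for some prime $p$, and then $[G:M]=[B_p:B_p\cap M]$ is a power of $p$. Consequently every maximal subgroup of $G$ has prime power index.

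By Lemma~\ref{gur} either $G=S(G)$, in which case $G$ is soluble and we are done, or $G/S(G)\simeq PSL(2,7)$; it remains to exclude the second possibility. Write $S=S(G)$ and $\bar G=G/S$. Every maximal subgroup $\bar M$ of $\bar G$ has the form $M/S$ with $M$ a maximal subgroup of $G$ containing $S$; since $M$ is NS-supplemented in $G$ by hypothesis and $MS/S=M/S$, Lemma~\ref{l2} shows that $\bar M$ is NS-supplemented in $\bar G$. Thus all maximal subgroups of $\bar G\simeq PSL(2,7)$ are NS-supplemented, and I may work entirely inside $\bar G$.

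To reach a contradiction I would choose a convenient maximal subgroup of $PSL(2,7)$. Take $\bar M$ of order $21$ (index $8$) with NS-supplement $\bar B$. Since $[\bar G:\bar M]=8=2^3$, the factorization argument of the first paragraph gives $2\in\pi(\bar B)$ and a Sylow $2$-subgroup of $\bar B$ of order $8$, that is, a full Sylow $2$-subgroup of $\bar G$. The subgroup $\bar M$ has a normal subgroup $X$ of order $7$, so condition~$(2)$ of Definition~\ref{d2} supplies a Sylow $2$-subgroup $T$ of $\bar B$, of order $8$, with $XT=TX$. As $X\cap T=1$, the product $XT$ is a subgroup of order $56$, i.e. of index $3$ in $PSL(2,7)$. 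But a simple group of order $168$ has no proper subgroup of index smaller than $7$, so a subgroup of index $3$ cannot exist; this contradiction rules out $G/S(G)\simeq PSL(2,7)$, and therefore $G=S(G)$ is soluble.

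The main obstacle is this last step. The prime-power-index reduction together with Lemma~\ref{gur} only narrows the non-soluble possibilities down to $PSL(2,7)$, a group whose maximal subgroups all do have prime power index, so the index data alone cannot finish the argument. Eliminating $PSL(2,7)$ genuinely requires exploiting condition~$(2)$ of the NS-supplement — applied to a normal subgroup of a well-chosen maximal subgroup — to manufacture a subgroup of forbidden order. The care needed is in checking that the relevant Sylow subgroup of the supplement is a \emph{full} Sylow subgroup of the whole group, so that the permutable product has exactly the claimed order $56$.
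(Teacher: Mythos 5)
Your proof is correct and follows essentially the same route as the paper: both establish that every maximal subgroup has prime power index via the permuting Sylow subgroups of the NS-supplement, invoke Lemma~\ref{gur}, and then eliminate $PSL(2,7)$ by producing an impossible subgroup of order $7\cdot 2^3=56$ from the normal subgroup of order $7$ in the maximal subgroup $[Z_7]Z_3$ and a full Sylow $2$-subgroup of its NS-supplement. The only cosmetic differences are that you pass to the quotient $G/S(G)$ directly via Lemma~\ref{l2} where the paper runs an induction on $|G|$ to reduce to $S(G)=1$, and that you spell out the order-$56$ contradiction that the paper leaves implicit.
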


\begin{proof}
We use induction on the order of $G$. By Lemma~\ref{l2}, all non-trivial quotients are soluble, hence~$S(G)=1$.
Let $M$~be a maximal subgroup of $G$ and $B$~is its NS-supplement in~$G$.
By the hypothesis, for every $p\in \pi(B)$ there exists a Sylow $p$-subgroup $B_p$ of~$B$ such that $B_pM=MB_p$. Since $M\ne G$, there exists
$r\in \pi(B)$ and a Sylow $r$-subgroup  $B_r$ such that $MB_r=G$.  Hence $|G:M|=r^b$. Consequently, every maximal subgroup of $G$ has prime power index. By Lemma~\ref{gur},
$G$ is either soluble, or $G\simeq PSL(2,7)$.
The group $PSL(2,7)$ has  a maximal subgroup  $H\simeq [Z_7]Z_3$ and it has not a subgroup of the order $7\cdot 2^3$. Hence $H$ is not NS-supplemented in $PSL(2,7)$. Consequently,  $G$ is soluble.

\end{proof}

The following example shows that the group that satisfies the hypotheses of Theorem~\ref{t2} can be  non-supersoluble.

\begin{ex}
In the group (\cite{gap}, IdGroup=[72,39])
$$
G=\langle a,b,c\mid a^3=b^3=c^8, \ ab=ba,\ a^c=b,\ b^c=ab\rangle
$$
all maximal subgroups are the following subgroups:
$$
M_1=[(\langle a\rangle\times \langle b\rangle)] \langle c^2\rangle,\
M_i=\langle c^x\rangle, \ x\in \langle a\rangle\times \langle b\rangle.
$$
Moreover, all maximal subgroups are NS-supplemented in~$G$ and the subgroup $M_1$ is non-supersoluble.

\end{ex}

% ------------------------------------------------------------------------
\end{document}